\renewcommand{\Re}{\operatorname{Re}}
\renewcommand{\Im}{\operatorname{Im}}
\def\R{\ensuremath\mathbb{R}}
\def\C{\ensuremath\mathbb{C}}
\def\Z{\ensuremath\mathbb{Z}}
\def\Q{\ensuremath\mathbb{Q}}
\def\H{\ensuremath\mathbb{H}}
\newtheorem{thm}{Theorem}[section]
\newtheorem{lemma}[thm]{Lemma}
\newtheorem{prop}[thm]{Proposition}
\theoremstyle{remark}
\newtheorem{remark}[thm]{Remark}
\def\eps{\ensuremath\varepsilon}
\def\0{\emptyset}
\def\Sym{\hbox{\rm Sym}}
\def\PSL{\hbox{\rm PSL}}
\def\vol{\hbox{\rm vol}}
\numberwithin{equation}{section}
\begin{document}
\title[Distribution of periods of holomorphic cusp forms]{On the distribution of periods of holomorphic cusp forms and zeroes of period polynomials}
\author{Asbjørn Christian Nordentoft}
\address{Department of Mathematical Sciences, Copenhagen University, Universitetsparken 5, Copenhagen 2100, Denmark}
\email{\href{mailto:acnordentoft@outlook.com}{acnordentoft@outlook.com}}
\date{\today}
\subjclass[2010]{11F67(primary), and 11L05(secondary)}
\begin{abstract}
In this paper we determine the limiting distribution of the image of the Eichler--Shimura map or equivalently the limiting joint distribution of the coefficients of the period polynomials associated to a fixed cusp form. The limiting distribution is shown to be the distribution of a certain transformation of two independent random variables both of which are equidistributed on the circle $\R/\Z$, where the transformation is connected to the additive twist of the cuspidal $L$-function. Furthermore we determine the asymptotic behavior of the zeroes of the period polynomials of a fixed cusp form. We use the method of moments and the main ingredients in the proofs are additive twists of $L$-functions and bounds for both individual and sums of Kloosterman sums.     
\end{abstract}
\maketitle
\section{Introduction}%%%%%%%%%%%%%%%%%%%%%%%%%
Understanding the special values of $L$-functions is a notoriously hard problem and has deep arithmetic content due to the conjectures of Birch--Swinnerton-Dyer and Bloch--Kato. As a striking example of the connection between $L$-functions and arithmetics, Kolyvagin \cite{Kolyvagin88} proved that if $E/\Q$ is an elliptic curve such that the central value of the Hasse--Weil zeta function $L(E,1)$ is {\it non-zero}, then the set of rational points $E(\Q)$ is {\it finite}.\\ 
Periods of automorphic forms have been an indispensable tool in the study of $L$-functions since the beginning of the theory (Hecke, Rankin--Selberg, Shimura, Manin) and continue to be so to this day (\cite{Ichino08}, \cite{SakVenk17}, \cite{NelVenk18}). This paper is concerned with the distribution properties of automorphic periods, which in many cases are much more well-behaved and easier to handle than the values of the $L$-functions themselves (see below for a toy example of this phenomena).\\ 

The {\it Eichler--Shimura map} defines an isomorphism between the space of weight $k$ holomorphic cusp forms and a parabolic cohomology group introduced by Eichler, by sending a cusp form to its periods. Our main result (see Theorem \ref{distN}) describes the asymptotic distribution of the periods of a fixed cusp form or equivalently the asymptotic joint distribution of the coefficients of period polynomials. Furthermore we use our methods to derive an asymptotic expression for the zeroes of the period polynomials of a fixed cusp form (see Theorem \ref{zeroes}), supplementing recent work of Jin, Ma, Ono, and Soundararajan \cite{JiMaOnSo16}, see also \cite{DiRo18}.  \\
For $k=2$ the period polynomials degenerate to constants and are known as {\it modular symbols} introduced by Birch and Manin. Petridis and Risager \cite{PeRi2}, \cite{PeRi} showed that modular symbols appropriately ordered are asymptotically normally distributed. From a cohomological point of view, the period polynomials are the natural generalization of modular symbols, but in this paper we show however that for $k\geq 4$ the coefficients of the period polynomials behave very differently from modular symbols.
\subsection{A toy example}
To illustrate the relation between periods and $L$-functions, let us consider a toy example of such automorphic periods given by the rational values of the complex exponential; $e^{2\pi i r}, r\in \Q$. These periods are connected to {\it Gauss sums}, which will serve as analogues of $L$-functions in this discussion;  
$$ \tau(\chi):=\sum_{a\in(\Z/q\Z)^\times } \chi(a) e^{2\pi i a/q},  $$ 
where $q$ is a positive integer and $\chi: (\Z/q\Z)^\times\rightarrow \C$ is a primitive Dirichlet character (see (\ref{periodformula}) below for one possible justification for the analogy between Gauss sums and $L$-functions). Gauss sums are in fact intimately connected to $L$-functions since $i^{-\kappa}\tau(\chi)q^{-1/2}$ is the {\it root number} of the Dirichlet $L$-function $L(\chi,s)$, where $\kappa=\tfrac{1-\chi(-1)}{2}$. More precisely the functional equation for Dirichlet $L$-functions takes the form;
$$  \Lambda(\chi,s):=\Gamma\left(\frac{s+\kappa}{2}\right)\left(\frac{q}{\pi}\right)^{s/2} L(\chi,s)=\frac{i^{-\kappa}\tau(\chi)}{q^{1/2}}\Lambda(\overline{\chi},1-s) .   $$% \Gamma\left(\frac{1-s+\kappa}{2}\right)\left(\frac{q}{\pi}\right)^{(1-s)/2} L(\overline{\chi},1-s)
 %(i.e. $\tau(\chi)q^{-1/2}$ shows up in the functional equation of $L(\chi,s)$).\\ 

To illustrate the difference in difficulty between dealing with periods and $L$-functions, we will consider the problem of determining the distribution of respectively the rational values of the complex exponential and the Gauss sums. It is easy to show that the periods themselves; 
$$P_q:=\{e^{2\pi i a/q}\mid (a,q)=1\}$$ 
equidistribute on the unit circle as $q\rightarrow \infty$ (notice that this is not completely trivial because of the co-primality condition). In this case the Weyl sums for the distribution problem are Ramanujan sums, which can be evaluated explicitly. \\
On the other hand Gauss showed that $\tau(\chi)$ always has absolute value equal to $q^{1/2}$. But understanding the value distribution of 
$$L_q:=\{\tau(\chi)\mid \chi:(\Z/q\Z)^\times\rightarrow \C,\text{primitive Dirichlet character}\},$$ 
as $q\rightarrow \infty$ turned out to be a much more difficult problem. This problem was solved by Katz \cite{Katz80} who showed (for $q$ prime) that the Gauss sums also equidistribute (now on the circle with radius $q^{1/2}$) using deep input from algebraic geometry.\\
This example illustrates in a very simple setting the difference in difficulty between dealing with automorphic periods and $L$-functions themselves. %There is also a qualitative difference in that the periods $P_q$ equidistribute down to a much finer scale compared to $L_q$ (as $q\rightarrow \infty)$.%

\subsection{The periods of holomorphic cusp forms}
In this paper we study periods of holomorphic cusp forms. The most famous example of a cusp form is probably the modular $\Delta$-function introduced by Ramanujan as the following $q$-series;
$$ \Delta(z):=q\prod_{n\geq 1}(1-q^n)^{24}=\tau(1)q+\tau(2)q^2+\tau(3)q^3+\ldots,\quad q=e^{2\pi i z}.$$
In this case, given a primitive Dirichlet character $\chi:(\Z/q\Z)^\times\rightarrow \C$, we define the twisted $L$-function;
$$ L(\Delta, \chi, s):= \sum_{n\geq 1} \chi(n)\tau(n)n^{-s}, $$
which converges absolutely for $\Re s>13/2$ and admits analytic continuation with a functional equation relating $s\leftrightarrow 12-s$. In this case the special values $s=1,\ldots, 11$ can be written as a twisted rational linear combination of {\it the periods of $\Delta$};
\begin{align}\label{periodformula}\pi^{-m}L(\Delta, \chi, m)=\sum_{\substack{-q/2<a<q/2,\\ 0\leq l\leq 10}} c(a/q,l,m)\, \chi(a) \underbrace{\int_{a/q}^{i\infty} \Delta(z) z^ldz}_{\rm periods},\end{align}
where $m\in \{1,\ldots, 11\}$ and $c(a/q,l,m)\in \Q$ (see \cite{Manin73} for details, where this is used to prove rationality results for $L(\Delta, \chi, m)$ and to construct $p$-adic $L$-functions). Notice the similarity between this formula for the twisted special values and the formula for Gauss sums in the toy example above. \\
We will study the distribution of the periods of holomorphic cusp forms appearing in (\ref{periodformula}) or equivalently of the image of the Eichler--Shimura map.\\

To be more precise let $ \mathcal{S}_k(\Gamma_0(N))$ denote the space of cusp forms of even weight $k$ and level $N$. To each cusp form $f\in \mathcal{S}_k(\Gamma_0(N))$ and each $\gamma \in \Gamma_0(N),$ the Eichler--Shimura map associates the following $(k-1)$-dimensional complex vector consisting of the periods of $f$;
\begin{align}\label{defperiod} &u_f(\gamma)=(u_{f,0}(\gamma), u_{f,1}(\gamma), \ldots, u_{f,k-2}(\gamma))\\
\nonumber &:= \Biggr(\int_{\gamma \infty}^\infty f(z) dz ,\int_{\gamma \infty}^\infty f(z) z dz,\ldots , \int_{\gamma \infty}^\infty f(z) z^{k-2}dz\Biggr)^T\in \C^{k-1} ,\end{align}
where $^T$ denotes matrix transpose and $\gamma\infty=a/c$ with $a,c$ the left-upper and -lower entry of $\gamma$. The map $u_f:\Gamma_0(N) \rightarrow \C^{k-1}$ can be shown to satisfy a 1-cocycle relation with respect to a certain action of $\Gamma_0(N)$ on $\C^{k-1}$, which we will make precise below in Section \ref{EichlerShimura}. Thus $u_f$ defines an element of the cohomology group $H^1(\Gamma_0(N), M)$, where $M$ is given by $\C^{k-1}$ equipped with the just mentioned action of $\Gamma_0(N)$. The association $f\mapsto u_f$ is a constituent of the {\it Eichler--Shimura isomorphism} as we will see below.\\
When ordered by the denominator of the cusp $\gamma \infty$, we show that the limiting distribution of $u_f(\gamma)$ is the distribution of a certain transformation of two independent random variables both of which are uniformly distributed on the circle $\R/\Z$ (see Theorem \ref{distN} and Theorem \ref{distN2} below for the precise statements).
%\begin{remark}
%The behavior of period polynomials for 
%$$\gamma=S=\begin{pmatrix}0 & -1\\ 1 & 0 \end{pmatrix},$$ 
%ixed and $f$ varying has been studied by many authors. We refer to \cite{DiRo18} for a detailed survey and see also Remark \ref{Ono} below. 
%\end{remark}
\subsection{Results for $\Gamma_0(N)$} %%%%%%%%%%%%%%
Let $f\in \mathcal{S}_k(\Gamma_0(N))$ be a cusp form of weight $k$ with Fourier expansion;
$$ f(z)=\sum_{n\geq 1} a_f(n) q^n,\quad q=e^{2\pi i z}.$$
Then for each $x\in \R$, we define the following Dirichlet series called the {\it additive twist by $x$} of the $L$-function of $f$;
\begin{align}  \label{additivetwist11}L(f,x ,s):=\sum_{n\geq 1} \frac{a_f(n)e(nx)}{n^s}, \end{align}
where $e(x)=e^{2\pi i x}$. This Dirichlet series converges absolutely for any $x\in \R$ when $\Re s>(k+1)/2$ by Hecke's bound; 
\begin{align}\label{hecke}\sum_{n\leq X}|a_f(n)|^2\ll_f X^k,\end{align}
which is known to hold for cusp forms for general Fuchsian groups of the first kind. When $x$ corresponds to a cusp (i.e. $x\in\Q$), the additive twist by $x$ satisfies analytic continuation to the entire complex plane and if $x$ is $\Gamma_0(N)$ equivalent to $\infty$, we also have a functional equation relating $s$ and $k-s$ (see Section \ref{AT-section} for details).\\
For $c>0$ such that $N|c$, we consider the periods $u_f$ as a $(k-1)$-dimensional complex random variable defined on the outcome space;
\begin{align}\label{Tc}\Omega_c:=\{ a/c\in \Q \mid  a,c \in \Z_{\geq 0}, (a,c)=1, 0\leq a<c \},\end{align}
endowed with the uniform probability measure, where $u_f(a/c):=u_f(\gamma)$ for $a/c=\gamma \infty$ (i.e. $a,c$ are the left upper- and lower entries of $\gamma\in \Gamma_0(N))$.\\
Our main result is that the limiting distribution as $c\rightarrow \infty$ (when appropriately normalized) is the tranformation of two independent distributions on the circle. 
\begin{thm}\label{distN}
Let $ f\in \mathcal{S}_k(\Gamma_0(N))$ be a cusp form of even weight $k\geq 4$ and level $N$. Then we have for any fixed box $A \subset \C^{k-1}$ that
\begin{align}
\nonumber \mathbb{P}_{\Omega_c}\left( \frac{u_f(a/c)}{C_k c^{k-2}}\in A \right):=& \frac{\#\{\frac{a}{c}\in \Omega_c\mid \frac{u_f(a/c)}{C_k c^{k-2}}\in A  \} }{\# \Omega_c}\\
 = &\mathbb{P}\left( F(Y,Z)\in A \right)+o(1)
\end{align}
as $c\rightarrow \infty$ with $N|c$, where $Y,Z$ are two independent random variables both distributed uniformly on $[0,1)$, $F: [0,1)\times [0,1) \rightarrow \C^{k-1}$ is given by
$$  F(y,z):= L(f,y , k-1)\left(1, z, \ldots, z^{k-2}\right)^T,  $$
and $C_k=\frac{i\Gamma(k-1)}{(2\pi )^{k-2}}$.\\ 
(Here $\mathbb{P}\left( F(Y,Z)\in A \right)$ denotes the probability of the event $ F(Y,Z)\in A$).
\end{thm}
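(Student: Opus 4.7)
The plan is to express the periods as a linear combination of additive twist $L$-values, isolate a single dominant term after the normalization by $C_k c^{k-2}$, and then apply the method of moments, using equidistribution of $(a/c, \bar a/c)$ on the torus (via Weil's bound for Kloosterman sums) to match the factorized limiting moment.

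First I would parameterize the vertical ray from $a/c$ to $i\infty$ by $z = a/c + it$, expand $(a/c+it)^j$ by the binomial theorem, and recognize each remaining integral as a Mellin transform of the Fourier expansion of $f$ at the cusp $a/c$. This gives
\begin{equation*}
u_{f,j}(a/c) = \sum_{\ell=0}^{j} \binom{j}{\ell}(a/c)^{j-\ell}\cdot \frac{i^{\ell+1}\ell!}{(2\pi)^{\ell+1}}\, L(f\otimes e(a/c), \ell+1).
\end{equation*}
Next, I would invoke the functional equation from Section \ref{AT-section}, which relates $L(f\otimes e(a/c),s)$ to $c^{k-2s}L(f\otimes e(-\bar a/c),k-s)$, where $\bar a\in (\Z/c\Z)^*$ is the inverse of $a$ modulo $c$ (so that $-d/c\equiv -\bar a/c \pmod 1$ for any $\gamma=\left(\begin{smallmatrix}a&b\\c&d\end{smallmatrix}\right)\in\Gamma_0(N)$). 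This makes the $\ell$-th summand of order $c^{k-2\ell-2}$, so only $\ell=0$ survives after dividing by $c^{k-2}$. Using the functional equation at $s=1$ to re-express the $\ell=0$ term yields a pointwise main-term identity of the shape
\begin{equation*}
\frac{u_{f,j}(a/c)}{C_k c^{k-2}} = (a/c)^j \cdot L\bigl(f\otimes e(-\bar a/c), k-1\bigr) + O_{f,j}(c^{-2}),
\end{equation*}
with the precise form of $C_k$ forced by reconciling the $i^{\ell+1}\ell!/(2\pi)^{\ell+1}$ prefactor with the gamma factors in the functional equation.

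With the main-term identity in hand, I would apply the method of moments. Since $k-1>(k+1)/2$, the Dirichlet series $L(f\otimes e(y),k-1)$ is absolutely convergent by \eqref{hecke}, so $F(Y,Z)$ is bounded; proving convergence in distribution on boxes therefore reduces to showing convergence of every joint polynomial moment of the real and imaginary parts of the normalized periods. On the limiting side, independence of $Y$ and $Z$ makes every such moment factor as $\int_0^1 z^M\, dz$ times $\int_0^1 L(f\otimes e(y),k-1)^N\,\overline{L(f\otimes e(y),k-1)}^{N'}\, dy$. On the empirical side, after substituting the main-term identity above, the corresponding mixed moment becomes
\begin{equation*}
\frac{1}{\varphi(c)}\sum_{\substack{a\bmod c\\ (a,c)=1}}(a/c)^M\, L(f\otimes e(-\bar a/c),k-1)^N\,\overline{L(f\otimes e(-\bar a/c),k-1)}^{N'} + o(1).
\end{equation*}

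Finally, I would expand each $L$-factor into its absolutely convergent Dirichlet series, interchange with the $a$-sum, and recognize the inner sums as Kloosterman sums $S(\alpha,\beta;c)=\sum_{(a,c)=1}e((\alpha a+\beta\bar a)/c)$. Weil's bound $|S(\alpha,\beta;c)|\ll_{\varepsilon} c^{1/2+\varepsilon}$ for $(\alpha,\beta)\not\equiv (0,0)\pmod c$ shows the non-diagonal contribution is $o(\varphi(c))$, while the diagonal $(\alpha,\beta)=(0,0)$ recovers precisely the factorized limiting moment. The hard part will be that the expanded product of $N+N'$ Dirichlet series is an infinite sum, so a direct pointwise application of Weil's bound would not be summable in $(\alpha,\beta)$. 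I would handle this by truncating each Dirichlet series at a slowly growing length $T=T(c)$ (the tail being negligible thanks to the absolute convergence at $s=k-1$), and by using sums-of-Kloosterman-sums bounds to beat the trivial estimate on the remaining finite but long sum. This is the ``sums of Kloosterman sums'' ingredient advertised in the abstract, and is what allows the method of moments to close.
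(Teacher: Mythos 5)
Your overall route is the same as the paper's: express $u_{f,j}(a/c)$ via critical values of $L(f\otimes e(a/c),s)$, use the functional equation and the bounds on non-extremal critical values to isolate the single term $(a/c)^j\,L(f\otimes e(-\bar a/c),k-1)+O(c^{-2})$ after normalizing by $C_kc^{k-2}$, compute joint moments, and detect the independence of the two circle variables through cancellation in Kloosterman sums, closing with a moment-determinacy argument (the limit is bounded, so Carleman/Fr\'echet--Shohat plus Cram\'er--Wold applies). All of that matches.

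There is, however, a genuine gap at the step where you ``recognize the inner sums as Kloosterman sums.'' After expanding the $L$-factors into Dirichlet series, the sum over $a$ is
$\sum_{(a,c)=1}(a/c)^{M}e\bigl(\beta\bar a/c\bigr)$ with $M=\sum_j j(\alpha_j+\beta_j)$; for $M\geq 1$ this is \emph{not} a Kloosterman sum because of the polynomial weight $(a/c)^M$, and these are exactly the moments that encode the joint law of the coordinates (the whole content of the theorem beyond the $j=0$ component). To turn the weight into additive characters you must Fourier-expand $x^M$ on $\R/\Z$, but its Fourier coefficients decay only like $1/|l|$, so $\sum_l|\widehat{h_M}(l)|\,|S(l,\beta;c)|$ is not controllable. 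The paper's device is to mollify: replace $h_M(x)=x^M$ by $h_{M,\delta}=h_M\ast\varphi_\delta$ with $\varphi_\delta$ an approximate identity, so that $\widehat{h_{M,\delta}}(l)\ll_A(\delta(1+|l|))^{-A}$, accept an error $O(\delta)$ pointwise, apply Weil to each frequency pair $(l,\beta)$, and then optimize ($\delta=c^{-1/6}$). Your outline contains no mechanism for this, and without it the moment computation does not close. Separately, two of your remarks are misdirected: for fixed modulus $c$ there is no average over $c$, so ``sums of Kloosterman sums'' bounds (which are needed for Theorem \ref{distN2}, general $\Gamma$) play no role here; and the truncation of the Dirichlet series is unnecessary for $\Gamma_0(N)$, since Weil's bound $|S(l,\beta;c)|\leq d(c)c^{1/2}(l,\beta,c)^{1/2}$ with $(l,\beta,c)^{1/2}\ll\max(n_i)^{1/2}$ is already summable against $\prod|a_f(n_i)|/n_i^{k-1}$ for $k\geq 4$. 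Your worry about summability in $(\alpha,\beta)$ is thus resolved by the decay of the coefficients at $s=k-1$ and by the mollification in the $l$-variable, not by truncation.
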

\begin{remark}
As was noted in \cite[Section 1.4.1]{BeDr19} the individual distribution of the critical values of $L(f,\gamma \infty,s)$ for $s\neq k/2$ are not that interesting since for $\Re s>(k+1)/2$ the critical values are rational values of a continuous function and consequently the limiting distribution is just the pullback by this continuous function of the Lebesgue measure on the circle $\R/\Z$, since reduced fractions equidistribute (and similarly for $\Re s<(k+1)/2$ using the functional equation). In order to handle the distribution of the Eichler--Shimura map (or equivalently the coefficients of period polynomials), we however need to control the dependence between the different critical values of $L(f,\gamma \infty,s)$ and maps of the type $\gamma \mapsto (\gamma \infty)^j$. In the end, the specific shape of the limiting distribution amounts to the non-trivial cancellation in sum of Kloosterman sums with uniformity in the frequencies and thus non-trivial input is needed. 
\end{remark}
\begin{remark}
Given an orthogonal basis $f_1,\ldots, f_d$ for $\mathcal{S}_k(\Gamma_0(N))$, we can also compute the joint distribution of 
$$u_{k,N}:=(u_{f_1}, \ldots, u_{f_d})^T\in \C^{d(k-1)},$$ 
when appropriately normalized, with a similar proof. We have however restricted the exposition to a single cusp form $f$ for notational simplicity. For the complete orthogonal basis the result is that the random variables defined from $\frac{(2\pi  /c )^{k-2}}{i\Gamma(k-1)}u_{k,N}$ converge in distribution (in the same sense as in Theorem \ref{distN} above) to the random variable 
$$F_{k,N}(Y,Z),$$
where $Y,Z$ are two independent and uniformly distributed random variables on $[0,1)$ and $F_{k,N}:[0,1)\times[0,1)\rightarrow \C^{d(k-1)}$ is given by 
\begin{align*}F_{k,N}(y,z):= &\Biggr( L(f_1,Y , k-1),\ldots ,  L(f_1,Y , k-1)z^{k-2}, \\
&\qquad \qquad \qquad \qquad \ldots, L(f_d,Y , k-1)z^{k-2}\Biggr)^T\in  \C^{d(k-1)}.  \end{align*}
In particular it is worth noticing that $u_{f_i}(\gamma)$ and $u_{f_j}(\gamma)$ for $i\neq j$ are highly dependent as opposed to the case $k=2$ (see \cite[Theorem 5.1]{No19}).
\end{remark}
\begin{remark}\label{Ono}
If $f\in \mathcal{S}_k(\Gamma_0(N))$ then it follows from work of Jin, Ma, Ono and Soundararajan \cite[Theorem 1.2]{JiMaOnSo16} that for $k\geq 6$ the period polynomials $r_{f,S}(\sqrt{N}X)$ (see (\ref{periodpoldef}) for a definition) converge coefficient for coefficient to $X^{k-2}-1$ as $N\rightarrow \infty$. 
\end{remark}
\begin{remark}The author \cite{No19} and independently Bettin and Drappeau \cite{BeDr19} (for level 1) have considered the distribution of central values of additive twists of $L$-functions of cusp forms of arbitrary even weight and showed that they are normally distributed. As was also noted in \cite[Section 3.3.2]{No19} the coefficients of the period polynomial can be expressed as linear combinations of critical values of additive twists (including the central value). However the left-most critical value at $s=1$ will be the dominating term, which is why we see that the distribution degenerates (and in particular is not normal). \end{remark}

\subsection{Zeroes of period polynomials} %%%%%%%%%%%%%%%%%%%%%%%
The vector $u_f$ encodes the periods of $f\in \mathcal{S}_k(\Gamma_0(N))$, which were introduced in a slightly different setting by M. Eichler in his study of parabolic cohomology \cite{Ei59}. He defined the {\it period polynomials} associated to $f$ as 
\begin{align}\label{periodpoldef} r_{f,\gamma}(X):&=\int_{\gamma\infty}^{\infty} f(z)(z-X)^{k-2}dz\\
\nonumber &= \sum_{j=0}^{k-2} X^j (-1)^j\binom{k-2}{j}\int_{\gamma\infty}^\infty f(z)z^{k-2-j}dz, \end{align}
where $\gamma\in \Gamma_0(N)$. Note that the periods of $f$ are equal to the coefficients of this polynomial (up to a scaling by factorials). The Eichler--Shimura isomorphism can also be described intrinsically and naturally in terms of period polynomials as was done in \cite{PaPo}. Our results can be interpreted as determining the joint distribution of the coefficients of the period polynomials.\\ 
Recently there has been a lot of study in the analytic properties of period polynomials, especially the location of the zeroes of $r_{f, S}$, where $S=\begin{psmallmatrix} 0 & -1 \\ 1 & 0 \end{psmallmatrix}$ (see \cite{DiRo18} for a complete list of references). The results of this paper should be seen more in relation with these results rather than with those of Petridis and Risager \cite{PeRi}. \\

For $f\in \mathcal{S}_k(\Gamma_0(N))$ a newform of even weight $k\geq 6$, we can use our methods to understand the zeroes of $r_{f,a/c}$ asymptotically as $c\rightarrow \infty$. The assumptions on $f$ are made in order to ensure that $L(f,x ,k-1)$ is non-zero for all $x\in \R$.
\begin{thm}\label{zeroes} Let $f\in \mathcal{S}_k(\Gamma_0(N))$ be a newform of even weight $k\geq 6$ and level $N$. Then $r_{f, \gamma}$ is a polynomial of degree $k-2$ for any $\gamma\in \Gamma_0(N)$. Furthermore all zeroes $x_0$ of $r_{f,\gamma}$ satisfy 
$$x_0= a/c+O_{k}((|a/c|+1)^{(k-4)/(k-2)}c^{-2/(k-2)}),$$
where $a,c$ are the entries in the left column of $\gamma$ (i.e. $\gamma \infty=a/c$).
\end{thm}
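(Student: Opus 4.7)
The plan is to expand $r_{f,\gamma}(X)$ as a polynomial in $\omega := X - a/c$, isolate a main term whose only zero is $\omega = 0$, and apply Rouch\'e's theorem to force the actual zeros of $r_{f,\gamma}$ to cluster near $a/c$. Substituting $z = a/c + iu$ into \eqref{periodpoldef}, binomially expanding $(z - X)^{k-2}$, and using the Mellin-type identity $\int_0^\infty f(a/c + iu)\, u^\ell\, du = \frac{\ell!}{(2\pi)^{\ell+1}} L(f \otimes e(a/c), \ell+1)$ yields the closed form
\begin{equation*}
r_{f,\gamma}(a/c + \omega) = \frac{i}{k-1} \sum_{m=0}^{k-2} \frac{i^m (-\omega)^{k-2-m}}{(k-2-m)!\,(2\pi)^{m+1}}\, L(f \otimes e(a/c), m+1),
\end{equation*}
and in parallel $u_{f,j}(\gamma) = \sum_{\ell=0}^{j}\binom{j}{\ell}(a/c)^{j-\ell}\, \frac{i^{\ell+1}\ell!}{(2\pi)^{\ell+1}}\, L(f \otimes e(a/c), \ell+1)$.

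Next I would apply the additive-twist functional equation from Section \ref{AT-section}, which has the schematic form $L(f \otimes e(a/c), \ell+1) = \epsilon_{a,c}\,(c/(2\pi\sqrt{N}))^{k - 2\ell - 2}\,\tfrac{\Gamma(k - \ell - 1)}{\Gamma(\ell + 1)}\, L(f \otimes e(-\bar{a}/c), k - \ell - 1)$. The $\ell = 0$ term then has magnitude $\asymp_k c^{k-2}\,|L(f \otimes e(-\bar{a}/c), k-1)|$, and the latter $L$-value is uniformly bounded below by a positive $k$-dependent constant: the Ramanujan--Deligne bound $|a_f(n)| \leq \tau(n)\, n^{(k-1)/2}$ combined with $a_f(1) = 1$ yields $|L(f\otimes e(x), k-1) - e(x)| \leq \zeta((k-1)/2)^2 - 1 < 1$ for $k \geq 6$. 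Extracting the $\ell = 0$ contribution from each $u_{f,j}(\gamma)$ and invoking the binomial identity $\sum_j \binom{k-2}{j}(a/c)^j (-X)^{k-2-j} = (a/c - X)^{k-2}$ produces the factorization
\begin{equation*}
r_{f,\gamma}(X) = \frac{C_{k,N}\, c^{k-2}\, L(f \otimes e(-\bar{a}/c), k-1)}{(k-1)!}\left[(a/c - X)^{k-2} + P_\gamma(X)\right],
\end{equation*}
and nonvanishing of the prefactor yields $\deg r_{f,\gamma} = k - 2$, settling the first assertion.

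Finally I would bound the perturbation $P_\gamma$ and conclude by Rouch\'e. A standard application of the functional equation (for $\ell+1 \leq (k-1)/2$), absolute convergence (for $\ell+1 > (k+1)/2$), and the convexity bound at the central point $\ell+1 = k/2$ shows that the dominant $\ell \geq 1$ contribution to $u_{f,j}(\gamma)/(C_{k,N}\, c^{k-2}\, L(f\otimes e(-\bar{a}/c), k-1))$ is the $\ell = 1$ term, of order $|a/c|^{j-1} c^{-2}$. Summing over $j$ with binomial weights and applying the mean-value estimate $\sum_{j \geq 1}\binom{k-2}{j}|X|^{k-2-j}|a/c|^{j-1} \leq (k-2)(|X|+|a/c|)^{k-3}$ gives $|P_\gamma(X)| \ll_k c^{-2}(|a/c|+1)^{k-3}$ on the disk $|X - a/c| \leq |a/c|+1$. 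Choosing $R = C_k'\, c^{-2/(k-2)}\,(|a/c|+1)^{(k-3)/(k-2)}$ with $C_k'$ sufficiently large, one has $|a/c - X|^{k-2} = R^{k-2}$ strictly dominating $|P_\gamma(X)|$ on the circle $|X - a/c| = R$, so by Rouch\'e $r_{f,\gamma}$ has exactly $k - 2$ zeros in the disk, matching its degree. Every zero of $r_{f,\gamma}$ thus lies within $R$ of $a/c$, which is the announced bound.

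The main obstacle is verifying the uniform-in-$a/c$ perturbation estimate $|P_\gamma(X)| \ll_k c^{-2}(|a/c|+1)^{k-3}$: the $c^{-2}$ saving is driven by the $\ell = 1$ functional-equation factor and the $(|a/c|+1)^{k-3}$ dependence by the mean-value aggregation above, but one must also check that the central $L$-value (controlled by convexity) and the large-$\ell$ terms (controlled by absolute convergence) are genuinely dominated by the $\ell = 1$ contribution, which holds automatically since $|a|, c \geq 1$ in the non-degenerate regime.
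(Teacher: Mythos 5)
Your proposal is correct and follows essentially the same route as the paper: both reduce the coefficients of $r_{f,\gamma}$ to critical values of the additive twist via (\ref{period}), use the functional equation (\ref{AddFE}) together with the bounds of Proposition \ref{bounds} to isolate the $L(f\otimes e(a/c),1)$ contribution as a main term of size $\asymp c^{k-2}$ with an $O_k(c^{-2})$ relative error, and deduce non-degeneracy (hence $\deg r_{f,\gamma}=k-2$) from Deligne's bound plus $a_f(1)=1$, which forces $L(f\otimes e(x),k-1)$ to be bounded away from zero for $k\geq 6$. The only genuine difference is the endgame: the paper first shows $x_0\ll_k |a/c|$ directly from the normalized coefficient asymptotics (\ref{coef}) and then substitutes the root into $(x_0-a/c)^{k-2}+O_k((1+|x_0|)^{k-3}c^{-2})=0$, whereas you package the same perturbation estimate into Rouch\'e's theorem on the circle $|X-a/c|=R$; the two arguments are equivalent in substance, and yours has the small bonus of counting all $k-2$ zeros with multiplicity inside the disk. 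Two routine points you should still record: (a) for Rouch\'e you need $R\leq |a/c|+1$ so that your bound on $P_\gamma$ is valid on the circle, which can fail only for the finitely many fractions $a/c$ with $c|a|+c^2\ll_k 1$, and those are handled trivially by adjusting the implied constant; (b) your reliance on convexity rather than the paper's Lindel\"of-type bound $L(f\otimes e(a/c),k/2)\ll_\eps c^\eps$ from \cite{No19} is indeed sufficient here, since for $k\geq 6$ the central term is still dominated by the $\ell=1$ term once $a\neq 0$ (and the case $a=0$ is degenerate and easier).
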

\begin{remark}
Analogously Jin, Ma, Ono and Soundararajan \cite[Theorem 1.2]{JiMaOnSo16} building on works of others (see \cite{DiRo18}) determined the zeroes of $r_{f,S}$ as either the weight $k$ or level $N$ tend to infinity. In their case the zeroes satisfy a version of the Riemann Hypothesis, of which no analogue seems to exist in our setting. 
\end{remark}
\subsection{Results for general cofinite Fuchsian groups} %%%%%%%%%%%%%%
We also obtain results for a general cofinite, discrete subgroup $\Gamma$ of $\PSL_2(\R)$ with a cusp at infinity of width 1 (see \cite[Chapter 2]{Iw} for definitions), but we have to take an extra average. Given a cusp form $f\in \mathcal{S}_k(\Gamma)$, we can similarly define the additive twists $L(f,x ,s)$ of the associated $L$-function, which satisfy the same properties as in the case of Hecke congruence groups, as we will explain in Section \ref{AT-section} below.\\
To state our results, we introduce the following set;
\begin{align}\label{T}T_{\leq 1}=T_{\leq 1, \Gamma}:=\left\{ r=\gamma \infty \in \R\mid \gamma\in \Gamma / \Gamma_\infty, 0\leq r<1 \right\}. \end{align} 
This is a slight modification of the set $T=T_\Gamma$ defined in \cite{PeRi}, which parametrizes the double coset $\Gamma_\infty \backslash \Gamma / \Gamma_\infty$. In this paper we need to choose a representative, since $u_f(\gamma)$ is not invariant under the action of $\Gamma_\infty$ from the left. One would get similar results by choosing different representatives.\\ 
Using the argument in the proof of \cite[Proposition 2.2]{PeRi}, we see that to any $r\in T_{\leq 1}$ there is a unique $\gamma \in \Gamma / \Gamma_\infty$ with lower-left entry $c>0$ such that $r=\gamma \infty$ and we define $c(r):=c$. Now for $X>0$, we consider $u_f$ as a random variable on the outcome space;
\begin{align}\label{OmegaX}\tilde{\Omega}_X:= \{r\in T_{\leq 1}\mid c(r)\leq X \}.\end{align}
endowed with the uniform probability measure. In this setting our result is the following. 
 \begin{thm}\label{distN2}
Let $ f\in \mathcal{S}_k(\Gamma)$ be a cusp form of even weight $k\geq 4$. Then we have for any fixed box $A \subset \C^{k-1}$ that
\begin{align}
\nonumber \mathbb{P}_{\tilde{\Omega}_X}\left( \frac{u_f(r)}{C_k c(r)^{k-2}}\in A \right):=& \frac{\#\{r\in \tilde{\Omega}_X\mid \frac{u_f(r)}{C_k c(r)^{k-2}}\in A  \} }{\# \tilde{\Omega}_X}\\ 
=&\mathbb{P}\left( F(Y,Z)\in A \right)+o(1)
\end{align}
as $X\rightarrow \infty$, where $Y,Z$ are two independent random variables both distributed uniformly on $[0,1)$, $F: [0,1)\times [0,1) \rightarrow \C^{k-1} $ and $C_k$ as in Theorem \ref{distN}.
\end{thm}
\section*{Acknowledgement}
I would like to express my gratitude to Dorian Goldfeld and Columbia University for their hospitality and to my advisor Morten Risager and Riccardo Pengo for valuable suggestions. Finally I would like to thank the referees for their careful readings, which improved the quality of the paper.

\section{Preliminaries and Background}%%%%%%%%%%%%%%%%%%%%%%%%%
In this section we will introduce some background on respectively the Eichler--Shimura isomorphism, bounds on sums of Kloosterman sums and finally additive twists of modular $L$-functions. 
\subsection{Background on the Eichler--Shimura isomorphism}\label{EichlerShimura}%%%%%%%%%%%%%%%%%%%%%%%%%
The purpose of this section is to show how the periods of $f$ appear "in nature". We will see that from a cohomological point of view, $u_f$ defines the natural higher weight analogue of modular symbols. We will refer to \cite{Wie19} for a comprehensive background. \\
Let $G$ be any group and let $M$ be a left $\Z[G]$-module. Then one can define cohomology groups; 
$$H^i(G,M):=Z^i(G,M)/B^i(G,M), $$ 
consisting of a quotient of certain maps 
$$u:\underbrace{G\times \ldots \times G}_{i}\rightarrow M,$$
corresponding to a specific choice of injective resolution.\\ 
In particular for $i=1$ we have the following explicit description;
\begin{align*} 
Z^1(G,M)= \{  u:G\rightarrow M\mid u(g_1g_2)=u(g_1)+g_1 u(g_2),\, \forall g_1,g_2\in G \},\\
B^1(G,M)=\{  v:G\rightarrow M\mid \exists x_v\in M\text{ such that } v(g)=(g-1)x_v,\, \forall g\in G \}. \end{align*}
Now fix a subset $P\subset G$ and consider  
$$Z^1_P(G,M):= \{ u\in Z^1(G,M)\mid u(p)\in (p-1)M,\forall p\in P\},$$
which we note still contains the boundaries $B^1(G,M)$. From this we define the first $P$-cohomology group as;
$$ H^1_P(G,M):=Z^1_P(G,M)/B^1(G,M).  $$
In our case we consider $G=\Gamma$, a discrete, co-finite, torsion-free subgroup of $\PSL_2(\R)$, and let $P$ be the set of parabolic elements of $\Gamma$. We note that parabolic cohomology groups carry a natural Hecke action.\\ 
Now consider $M=V_{k-2}(\C)\cong \Sym^n(\C^2)$, the space of homogenous polynomials in two variables of degree $k-2$ with coefficients in $\C$, equipped with the following left-action of $\Gamma$;
$$ (\gamma . P)(X,Y):= P((X,Y)\gamma)=P((aX+cY, bX+dY)),  $$
for $\gamma=\begin{psmallmatrix} a&b\\ c & d \end{psmallmatrix}\in \Gamma$ and $P\in V_{k-2}(\C)\subset \C[X,Y]$. From this data we form Eichler's parabolic cohomology group $H^1_P(\Gamma, V_{k-2}(\C))$.\\
Given a cusp form $f\in \mathcal{S}_k(\Gamma)$ of weight $k$, we can define a map $\sigma_f:\Gamma\rightarrow V_{k-2}(\C)$ as;
$$\sigma_f(\gamma)(X,Y):= \int_{\gamma \infty}^\infty f(z)(Xz+Y)^{k-2}dz,$$ 
and it can be shown that $\sigma_f\in Z^1_P(\Gamma,V_{k-2}(\C))$. We similarly define $\sigma_{\overline{f}}\in Z^1_P(\Gamma,V_{k-2}(\C))$ for $\overline{f}\in \overline{\mathcal{S}_k(\Gamma)}$ an anti-holomorphic cusp form of weight $k$. Note that when $k=2$, $\sigma_f$ is exactly {\it the modular symbol map} of \cite[(1.1)]{PeRi}.\\ 
The main theorem of Eichler--Shimura \cite[Proposition 6.2.3,Proposition 6.2.5]{Wie19} is now that the $\C$-linear map;
\begin{align*}
\mathcal{S}_k(\Gamma)\oplus \overline{\mathcal{S}_k(\Gamma)} &\rightarrow H^1_P(\Gamma, V_{k-2}(\C))\\
(f, \overline{g})&\mapsto (\gamma \mapsto \sigma_f(\gamma)+\sigma_{\overline{g}}(\gamma))
\end{align*}
is an isomorphism, which carries a natural action of the Hecke algebra as explained in \cite[Section 8.3]{Sh94} (see also the seminal paper \cite{De71} for a purely algebraic proof of these facts).\\ 

Observe that the periods that we will study in this paper $u_f(\gamma)$ are (up to simple scaling by binomial coefficients) given by the coefficients of $\sigma_f(\gamma)(X,Y)$, and one can use the above to define an (equivalent) action of $\Gamma$ on $\C^{k-1}$ directly (similar to the action of $\Gamma$ on $\R^{k-1}$ described in \cite[Chapter 8]{Sh94}), which was alluded to in the introduction. Thus we see that from a cohomological point of view the periods $u_f$ define a natural generalization of modular symbols when $f\in \mathcal{S}_k(\Gamma)$, $k\geq 4$.\\ 
Furthermore we notice the following obvious connection with the period polynomials defined in (\ref{periodpoldef}); 
$$r_{f,\gamma}(X)=\sigma_f(\gamma)(1,-X).$$
The reason why we used the definition (\ref{periodpoldef}) of the period polynomials was to make the connection to the results listed in \cite{DiRo18} clear.
\begin{remark}In fact there is a notion of {\it modular symbols} associated to $\mathcal{S}_k(\Gamma)$ for all weights $k$ \cite[Section 1.2]{Wie19}, and one can show that the parabolic cohomology groups $H^1_P(\Gamma,V_{k-2}(\C))$ are isomorphic to the {\it cuspidal modular symbols} (see \cite[Theorem 5.2.1]{Wie19} for details)\end{remark}

\subsection{Spectral bounds of sums of Kloosterman sums}%%%%%%%%%%%%%%%%%%%%%%%%%
An important ingredient when proving our main results is the cancellation in Kloosterman sums. For arithmetic subgroups we have very strong bounds for individual Kloosterman sums  from Weil's work on the Riemann Hypothesis over finite fields, but for general Fuchsian groups of the first kind, we only have non-trivial bounds when we average over the moduli. Below we will collect the results we will need on Kloosterman sums.\\

Let $\Gamma$ be a co-finite, discrete subgroup of $\PSL_2(\R)$ with a cusp at infinity of width 1. Then we define the Kloosterman sum with frequencies $m,n$ and modulus $c$ (the lower-left entry of some matrix $\gamma \in \Gamma$) as; 
\begin{align}\label{generalgauss} 
S(m,n;c):= \sum_{\begin{pmatrix} a & \ast \\ c & d\end{pmatrix}\in \Gamma_\infty \backslash \Gamma / \Gamma_\infty} e\left(m\frac{d}{c}+n\frac{a}{c}\right).
\end{align}
It can be shown that
$$ \#\left\{ \begin{pmatrix} a & \ast \\ c & d\end{pmatrix}\in \Gamma_\infty \backslash \Gamma / \Gamma_\infty\mid 0\leq c\leq X \right\}\ll X^2, $$
which yields the following trivial bound 
$$ S(m,n;c)\ll c^2,   $$
uniformly in $m,n$, see \cite[Proposition 2.8]{Iw}. If $\Gamma=\Gamma_0(N)$ is a Hecke congruence group, we can do much better by Weil's bound;
\begin{align}\label{deligne}|S(m,n;c)|\leq d(c) c^{1/2} (m,n,c)^{1/2},\end{align}
where $d$ is the divisor function. The point is now that if we average over the moduli $c$, we can also detect cancelation in Kloosterman sums for general $\Gamma$.
 
\subsubsection{Spectral theory of Kloosterman sums}The most powerful tools for obtaining bounds for sums of Kloosterman sums come from the spectral theory of automorphic forms following an approach initiated by Selberg. We refer to \cite{Iw} for a comprehensive background on the spectral theory of automorphic forms.\\ 
In this approach the spectrum of the automorphic Laplacian $\Delta=\Delta_\Gamma$ plays a prominent role, which in local coordinates is given by
$$\Delta_\Gamma=-y^2\left(\frac{\partial^2}{\partial x^2}+\frac{\partial ^2}{\partial y^2} \right).$$ 
It can be shown that $\Delta_\Gamma$ with domain given by smooth and bounded functions on $\Gamma\backslash \H$, defines a non-negative, unbounded operator with a unique self-adjoint extension (which we also denote $\Delta=\Delta_\Gamma$). We observe that $\lambda=0$ is always an eigenvalue of $\Delta_\Gamma$ corresponding to the constant function. Furthermore the famous {\it Selberg conjecture} predicts  that for congruence subgroups $\Gamma_0(N)$ the first non-zero eigenvalue is $\geq 1/4$. It is known that there exist non-congruence subgroups $\Gamma$ such that $\Delta_\Gamma$ has non-zero eigenvalues arbitrarily close to 0 as explained in \cite[(11.15)]{Iw}. \\
%We denote the smallest non-zero eigenvalue $\lambda_1=s_1(1-s_1)$ with $\Re s_1\geq 1/2$, which will play a role in the bounds we obtain for sums of Kloosterman sums, although we will not use the explicit dependence. \\ 
For $n=0$ the Kloosterman sums reduce to a generalization of the classical Ramanujan sums and the $m$th Fourier coefficient of the Eisenstein series;
$$ E(z,s)=E_\Gamma(z,s)=\sum_{\gamma \in \Gamma_\infty \backslash \Gamma } \Im(\gamma z)^s$$
is exactly
$$  \Gamma(s)\zeta(2s)^{-1} \sum_{c>0} \frac{S(m,0;c)}{c^{2s}}, $$
where the sum is taken over lower-left entries of matrices in $\Gamma$. Recall that by the general theory of Eisenstein series due to Selberg, $E(z,s)$ has its rightmost pole at $s=1$, which is a simple pole with residue $\vol (\Gamma)^{-1}$, \cite[Proposition 6.13]{Iw}. All the other finitely many poles in $1/2<\Re s< 1$ are also simple and the residues are eigenfunctions for $\Delta$.\\
First of all lets see how to use the analytic properties of Eisenstein series to understand the asymptotic size of the outcome space $\tilde{\Omega}_X$: This is possible since we have a bijection
$$ \Gamma_\infty\backslash \Gamma / \Gamma_\infty\leftrightarrow T_{\leq 1,\Gamma }\cup \{\infty\}, $$
with $T_{\leq 1,\Gamma}$ as defined in (\ref{T}). Thus we see that the constant term in the Fourier expansion of $E(z,s)$ is exactly the generating series for $T_{\leq 1, \Gamma}$. Since the pole of $E(z,s)$ is a constant, the constant term in the Fourier expansion of $E(z,s)$ also has a simple pole (with the same residue). Now by a standard complex analysis argument we get 
\begin{align}\label{T(X)}\# \tilde{\Omega}_X=\frac{X^2}{\vol(\Gamma)}+O(X^{2-\delta_\Gamma }), \end{align}
for some $\delta_\Gamma >0$ depending on the spectral gap for $\Gamma$, with $\tilde{\Omega}_X$ as in (\ref{OmegaX}).\\
Furthermore since the pole at $s=1$ of the Eisenstein series has constant residue, it follows that for $m\neq 0$ the Dirichlet series 
$$\sum_{c} \frac{S(m,0;c)}{c^{2s}},$$
where the sum is over lower-left entries of matrices in $\Gamma$, has analytic continuation to $\Re s>\Re s_1\geq 1/2$ where $\lambda_1=s_1(1-s_1)$ is the smallest non-zero eigenvalue. From this one easily proves 
$$  \sum_{c\leq X} S(m,0;c)\ll_\Gamma |m|^{1/2}X^{2-\delta_\Gamma },  $$
for some $\delta_\Gamma >0$ (see \cite[(3.6)]{PeRi}).\\
For $mn\neq 0$ the corresponding Dirichlet series 
$$\sum_{c} \frac{S(m,n;c)}{c^{2s}}, $$  
shows up in the Fourier coefficients of the Poincaré series 
$$P_m(z,s)=\sum_{\gamma\in \Gamma_\infty\backslash \Gamma} e(m\gamma z) (\Im \gamma z)^s,$$
as was brilliantly used by Goldfeld and Sarnak in \cite{GoSa83} to obtain bounds on sums of Kloosterman sums. Using analytic properties of the resolvent of $\Delta_\Gamma$, they show that $P_m(z,s)$ has meromorphic continuation with possible poles only at the spectrum of $\Delta_\Gamma$ and from this they obtain bounds for sums of Kloosterman sums. For our applications the dependence on $m,n$ is essential, but this dependence is not clear from the statement of their theorem \cite[Theorem 2]{GoSa83}. However using \cite[Remark 1]{GoSa83} one can easily adapt their arguments to deduce the bound
\begin{align} \label{generalKlo} \sum_{c\leq X} S(m,n;c)\ll_\Gamma mnX^{2-\delta_\Gamma },  \end{align}
for some $\delta_\Gamma >0$ depending on the spectral gap of $\Gamma$. We will omit the details. \\% where we use the results in \cite[Section 3]{GoSa83}.\\

\subsection{Additive twists}\label{AT-section}
The idea behind the proofs of the main theorems is to relate the periods of $f\in \mathcal{S}_k(\Gamma)$ to critical values of additive twists of the $L$-function of $f$. The additive twists are defined as 
$$ L(f,r , s):= \sum_{n\geq 1}\frac{a_f(n)e(nr)}{n^s}, $$
where $r\in \R$ and $e(x)=e^{2\pi i x}$ which apriori converges for $\Re s>(k+1)/2$ by Hecke's bound (\ref{hecke}). If $r$ corresponds to a cusp of $\Gamma$ then $L(f,r , s)$ admits analytic continuation by the integral representation;
$$   L(f,r , s)= \frac{(2\pi)^{s}}{\Gamma(s)} \int_0^\infty f(r+iy)y^{s}\frac{dy}{y}.$$
Furthermore if $r=a/c=\gamma \infty $ with 
$$\gamma = \begin{pmatrix} a& b \\ c & d \end{pmatrix}\in \Gamma,$$
the completed $L$-function satisfies the following functional equation;
\begin{align} \nonumber \Lambda(f,a/c ,s)&:=  \Gamma(s) \left(\frac{c}{2\pi }\right)^{s} L(f,a/c ,s)\\
\label{AddFE}&= (-1)^{k/2}  \Lambda(f,-d/c ,k-s),\end{align}
where $-d/c=\overline{r}=\gamma^{-1}\infty$ (see for instance \cite[Section A.3]{KoMiVa02}).\\
The relation between the periods of $f$ and additive twists is given by the following.

\begin{lemma}
Let $l\in \Z_{\geq 0}$ be a non-negative integer, $\gamma\in\Gamma$ and $f$ as above. Then we have
\begin{align}
\label{period}\int_{\gamma \infty}^\infty f(z) z^l dz=\sum_{j=0}^l \binom{l}{j} (a/c)^{l-j} (-2\pi i)^{-j-1} \Gamma(j+1)L(f,a/c ,j+1),
\end{align}
where $a/c=\gamma \infty$.
\end{lemma}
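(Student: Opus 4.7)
The plan is to evaluate the integral on the left by a direct change of variables along a vertical contour, then apply the binomial theorem and recognize the resulting Mellin integrals as values of the additive twist.

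First I would use the fact that $f(z)z^l$ is holomorphic on the upper half plane and $f$ decays rapidly at the cusp $\infty$ (so the integral converges absolutely) to deform the path of integration in $\int_{\gamma\infty}^\infty f(z)z^l\,dz$ to the vertical ray $z=a/c+iy$, $y\in(0,\infty)$. Then $dz=i\,dy$ and
\begin{equation*}
\int_{\gamma\infty}^\infty f(z)z^l\,dz
= i\int_0^\infty f(a/c+iy)(a/c+iy)^l\,dy.
\end{equation*}

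Next I would expand $(a/c+iy)^l$ by the binomial theorem as $\sum_{j=0}^l\binom{l}{j}(a/c)^{l-j}(iy)^j$ and interchange the (finite) sum with the integral. This produces
\begin{equation*}
\int_{\gamma\infty}^\infty f(z)z^l\,dz
= \sum_{j=0}^l \binom{l}{j}(a/c)^{l-j}\, i^{\,j+1}\int_0^\infty f(a/c+iy)\,y^{j}\,dy.
\end{equation*}
Now I invoke the integral representation of the additive twist stated in Section \ref{AT-section}, namely
\begin{equation*}
\int_0^\infty f(a/c+iy)\,y^{s}\,\frac{dy}{y}=\frac{\Gamma(s)}{(2\pi)^s}L(f\otimes e(a/c),s),
\end{equation*}
applied at $s=j+1$, to rewrite the inner integral as $\Gamma(j+1)(2\pi)^{-j-1}L(f\otimes e(a/c),j+1)$.

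Finally, the only thing left is a cosmetic simplification: one has to check that $i^{\,j+1}(2\pi)^{-j-1}=(-2\pi i)^{-j-1}$, which follows from the identity $i^{\,j+1}\cdot i^{\,j+1}=(-1)^{j+1}$, so $i^{\,j+1}=(-1)^{j+1}i^{-j-1}$ and hence $i^{\,j+1}(2\pi)^{-j-1}=(-1)^{j+1}(2\pi i)^{-j-1}=(-2\pi i)^{-j-1}$. Substituting this yields exactly the right-hand side of \eqref{period}. There is no real obstacle here: the only thing that requires comment is the contour deformation and the convergence of $\int_0^\infty f(a/c+iy)y^j\,dy$, both of which follow from the exponential decay of $f$ as $y\to\infty$ and the standard analytic continuation argument used to define $L(f\otimes e(a/c),s)$ at non-integer (and, by continuity, integer) points in the region where the Mellin integral converges absolutely.
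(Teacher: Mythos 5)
Your proposal is correct and follows essentially the same route as the paper: parametrize the path as the vertical ray $a/c+iy$, expand $(a/c+iy)^l$ binomially, and identify each Mellin integral with $\Gamma(j+1)(2\pi)^{-j-1}L(f\otimes e(a/c),j+1)$, with the same constant bookkeeping $i^{j+1}(2\pi)^{-j-1}=(-2\pi i)^{-j-1}$. The only difference is that you spell out the contour-deformation and convergence justifications that the paper leaves implicit as ``a straight forward computation.''
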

\begin{proof}
By a straight forward computation we have
\begin{align*}
\int_{\gamma \infty}^\infty f(z) z^l dz&= i \int_{0}^\infty f(a/c+it) (a/c+it)^l dt\\
&= \sum_{j=0}^l i^{j+1}(a/c)^{l-j}\int_{0}^\infty f(a/c+it) t^j dt\\
&= \sum_{j=0}^l i^{j+1}(a/c)^{l-j}(2\pi)^{-j-1}\Gamma(j+1)L(f,a/c ,j+1),
\end{align*}
as wanted.
\end{proof}

It turns out that the dominating term for all of these periods will be the left-most critical value $L(f,a/c ,1)$. This is hinted to by the following proposition.

\begin{prop}\label{bounds}
%Let 
%$$\gamma = \begin{pmatrix} a& b \\ c & d \end{pmatrix}\in \Gamma.$$
For $a/c \in T_{\leq 1, \Gamma}$ (i.e. $a,c$ are respectively, the left upper and left lower entries of some matrix in $\Gamma$ such that $0<a/c<1$), we have  the following bounds;
\begin{enumerate}[(i)]
\item $L(f,a/c , \sigma )\ll 1$ for $\sigma\geq k/2+1$,
\item $L(f,a/c , k/2)\ll_\eps c^\eps$,
\item $L(f,a/c , \sigma )\ll c^{k-2\sigma}$ for $\sigma\leq k/2-1$, 
\end{enumerate}
as $c\rightarrow \infty$.
\end{prop}
\begin{proof}
{\bf Case {\it (i)}} For $\sigma\geq k/2+1$ we get by Hecke's bound (\ref{hecke}) the following uniform bound;
$$ L(f,a/c , \sigma) \ll \sum_{n\geq 1}\frac{|a_f(n)|}{n^\sigma}\leq  \sum_{n\geq 1}\frac{|a_f(n)|}{n^{k/2+1}} <\infty, $$
which is independent of $a/c$ and $\sigma\geq k/2+1$.\\

{\bf Case {\it (ii)}} The bound on the central value was proved by the author \cite[Corollary 5.8]{No19}. \\

{\bf Case {\it (iii)}} Finally for $ \sigma\leq k/2-1$, we get by the functional equation (\ref{AddFE}) the following;
$$ L(f,a/c ,\sigma)= \frac{(-1)^{k/2}\Gamma(k-\sigma) (2\pi )^{-k+\sigma}}{\Gamma(\sigma) (2\pi )^{-\sigma}} c^{k-2\sigma} L(f,-d/c , k-\sigma), $$
and since $k-\sigma\geq k/2+1$ the result follows from {\it (i)}. Observe that we avoid the poles of the $\Gamma$-function in the numerator.  
\end{proof}

\section{On the zeroes of the period polynomials}%%%%%%%%%%%%%%%%%%%%%%%%%
In this section we will apply the bounds in Proposition \ref{bounds} to determine the asymptotic behavior of the zeroes of the period polynomials associated to a fixed cusp form as the denominator of the cusp varies.\\
Let $f\in \mathcal{S}_k(\Gamma_0(N))$ be a fixed newform of even weight $k\geq 6$. Consider the period polynomials associated to $f$;
$$r_{f,\gamma}(X)=\int_{\gamma \infty}^{\infty} f(z)(z-X)^{k-2}dz=b_{f,k-2}(\gamma)X^{k-2}+\ldots+ b_{f,0}(\gamma),$$
where $\gamma\in \Gamma$ and 
\begin{align*}b_{f,l}(\gamma)&=(-1)^l\binom{k-2}{l}\int_{\gamma\infty}^\infty f(z)z^{k-2-l}dz\\
&=\sum_{j=0}^{k-2-l} \frac{(-1)^l\binom{k-2}{l}\binom{k-2-l}{j}}{(-2\pi i)^{j+1}} (a/c)^{k-2-l-j}  \Gamma(j+1)L(f,a/c ,j+1)\end{align*} 
are the coefficients of $r_{f,\gamma}$. We have the following bound on the Fourier coefficients of $f$ due to Deligne \cite{DeWeil2};
$$|a_f(n)|\leq d(n)n^{(k-1)/2},$$
where $d$ is the divisor function. This implies that
$$ \sum_{n\geq 2} \frac{|a_f(n)|}{n^{k-1}} \leq \sum_{n\geq 2} \frac{d(n)}{n^{(k-1)/2}}= \zeta((k-1)/2)^2-1\leq  \zeta(5/2)^2-1=0.799...<1.$$
This shows that $ L(f,x ,k-1)$ is bounded both from above and away from zero uniformly in $x\in \R$. Combining this observation with the functional equation for additive twists we conclude that
\begin{align} 
\nonumber b_{f,k-2}(\gamma)&=\frac{L(f,\gamma \infty,1)}{-2\pi i} \\
\label{firstcoef}&= \frac{(-1)^{k/2}i \Gamma(k-1)}{(2\pi)^{k-1}}L(f,\gamma^{-1} \infty ,k-1)c^{k-2}\neq 0. 
\end{align}
Thus $r_{f,\gamma} $ is actually a polynomial of degree $k-2$ and normalizing it so that it becomes a monic polynomial the coefficients become; 
$$\tilde{b}_l(\gamma)=\tilde{b}_{f,l}(\gamma):= b_{f,l}(\gamma)/b_{f,k-2}(\gamma),\quad l=0,\ldots, k-2.$$
We can now prove the promised asymptotic expression for the zeroes of $r_{f, \gamma}$ as $c\rightarrow \infty$.
 \begin{proof}[Proof of Theorem \ref{zeroes}]
 Let $r=\gamma\infty=a/c$. Using (\ref{firstcoef}) we see that $b_{f,k-2}(\gamma)\gg_k c^{k-2}$. Combining this with the expression (\ref{period}) and the bounds from Proposition \ref{bounds}, we conclude the following;
\begin{align}\nonumber \tilde{b}_l(\gamma)&=(-1)^l\binom{k-2}{l} r\, ^{k-2-l}+O_k\left(\sum_{j=1}^{k-2-l}|r|^{k-2-l-j} \frac{|L(f,r,j+1)|}{|b_{f,k-2}(\gamma)|}\right)\\
\nonumber &=(-1)^l\binom{k-2}{l} r\, ^{k-2-l}+O_k\left(\sum_{j=1}^{k-2-l}|r|^{k-2-l-j} c^{\max(0,k-2j-2)}c^{-(k-2)}\right).\end{align}
One easily checks that $|r|^{k-2-l-j} c^{\max(0,k-2j-2)}c^{-(k-2)} \ll |r|^{k-4-l}c^{-2}$ for all $j=1,\ldots, k-2-l$ using that $|r|\geq c^{-1}$. Thus we conclude
\begin{align}\label{coef}  \tilde{b}_l(\gamma)=  (-1)^l\binom{k-2}{l} r\, ^{k-2-l}+O_k(|r|^{k-4-l}c^{-2}),\end{align}
and in particular $\tilde{b}_l(\gamma)\ll_k |r|^{k-2-l}$.\\ 
Now we will show that any zero $x_0$ of $r_{f, \gamma}$ is bounded by $O_k(| r |)$. So assume that a zero $x_0$ of $r_{f,\gamma}$ satisfies $|x_0|\geq |r|$. Then using (\ref{coef}), we get the bound
 $$|x_0|^{k-2}=|-\tilde{b}_{k-3}(\gamma)x_0^{k-3}-\ldots -\tilde{b}_0(\gamma)|\ll_k |r| |x_0|^{k-3},  $$
 which implies $x_0\ll_k |r|$ as wanted. \\%Thus we conclude that any root of $r_{f,\gamma}$ satisfies $x_0\ll_k \overline{r}$. \\
Now combining $x_0\ll_k |r|$ with (\ref{coef}), we conclude for any root $x_0$ of $r_{f,\gamma}$ we have that
 $$0=x_0^{k-2}+\tilde{b}_{k-3}(\gamma)x_0^{k-3}+\ldots +\tilde{b}_0(\gamma)= (x_0-r)^{k-2}+O_k((|r|+1)^{k-4}c^{-2}), $$
 which implies that $|x_0-r|\ll_k (|r|+1)^{(k-4)/(k-2)}c^{-2/(k-2)} $ as wanted.
 \end{proof}
If we restrict to $\gamma \in \Gamma_\infty\backslash\Gamma_0(N)$ such that $r=a/c=\gamma \infty \in \Omega_c$ (i.e. $0<r=a/c<1$) we conclude that the zeroes of $r_{f,\gamma}$ satisfy
$$x_0=a/c+O_k(c^{-2/(k-2)}). $$

%In particular when, say, $\overline{r}\gg 1/\log c$, then $-\overline{r}$ is the main term above. As $c\rightarrow \infty$ it is clear that $\overline{r}\in \Omega_c$ satisfies $\overline{r}\gg 1/\log c$ with probability one. We will omit the details but from the above one can easily deduce the following.     
%\begin{cor}
%Let $f\in \mathcal{S}_k(\Gamma_0(N))$ be cusp form of weight $k\geq 6$ and level $N$. Then we have for any fixed subset $\Omega\subset \R^{k-1}$ that
%\begin{align}
%\nonumber\frac{\#\{\gamma^{-1} \infty \in \Omega_c \mid (\tilde{b}_{f,0}(\gamma), \ldots, \tilde{b}_{f,k-2}(\gamma))^T\in \Omega \}}{\varphi(c)}\\\rightarrow \mu_{[0,1)}( \tilde{F}^{-1}(\Omega) ),
%\end{align}
%as $c\rightarrow \infty$ with $c\equiv 0\, (N)$, where $ \mu_{[0,1)}$ is the Lebesgue measure on $[0,1)$ and $\tilde{F}: [0,1)\rightarrow \R^{k-1}$ is given by
%$$ \tilde{F}(x)=\left(x^{k-2}, \binom{k-2}{k-3}x^{k-3},  \ldots, \binom{k-2}{1}x, 1\right)^T.$$ 
%\end{cor} 

\section{On the distribution of the Eichler--Shimura map} %%%%%%%%%%%%%%%%%%%%%%%%%
In this section we will prove Theorem \ref{distN} and Theorem \ref{distN2} using the method of moments. More precisely this is done by firstly computing all the moments of the random variable $u_f$ on respectively $\Omega_c$ and $\tilde{\Omega}_X$ and then applying a result from probability theory due to Fréchet--Shohat to determine the limiting distribution. 
\subsection{Computation of the moments of $u_f$}%%%%%%%%%%%%%%%%%%%%%%%%%%%%%%%%%%
To state our results we let (as above)
$$ f(z)=\sum_{n\geq 1} a_f(n)  q^n,$$
be the Fourier expansion of a cusp form $f\in \mathcal{S}_k(\Gamma)$. Then we define the following Dirichlet series for $\alpha,\beta\in \Z_{\geq 0}$;
\begin{align} \label{momentsL} L_{f, \alpha, \beta}(s):&= \sum_{\substack{n_1,\ldots , n_{\alpha+\beta}>0\\ n_1+\ldots+n_\alpha=n_{\alpha+1}+\ldots +n_{\alpha+\beta}}} \frac{a_f(n_1)\cdots a_f (n_{\alpha})\overline{a_f (n_{\alpha+1})}\cdots \overline{a_f (n_{\alpha+\beta})}}{(n_1\cdots n_{\alpha+\beta})^s}\\
\nonumber &=\int_0^1 L(f,x , s)^{\alpha,\beta}dx, \end{align}
which converges absolutely for $\Re s>(k+1)/2$ by Hecke's bound (\ref{hecke}), where we use the notation $z^{\alpha,\beta}=z^\alpha \overline{z}^\beta$.\\
For $\Gamma=\Gamma_0(N)$ a Hecke congruence group, we get the following calculation of the moments.  
\begin{thm} \label{moment1}
Let $ f\in \mathcal{S}_k(\Gamma_0(N))$ be a cusp form of even weight $k\geq 4$. Then for any non-negative integers;
$$\alpha_0,\ldots, \alpha_{k-2}, \beta_0,\ldots, \beta_{k-2},$$ 
not all zero and $c\equiv 0\, (N)$, we have that
\begin{align} \nonumber & \frac{1}{\varphi(c)}\sum_{\substack{0\leq a<c,\\ (a,c)=1}} \prod_{j=0}^{k-2}  \left (\frac{(2\pi/c)^{k-2}}{\Gamma(k-1)i}\int_{a/c}^\infty f(z) z^{j}dz\right)^{\alpha_j,\beta_j}\\
&= \frac{L_{f, \alpha, \beta} (k-1)}{1+\sum_{j=0}^{k-2}j\cdot (\alpha_j+\beta_j)} +O_{\eps,\alpha,\beta,f} (c^{-1/6+\eps}),\end{align}   
where $\alpha=\alpha_0+\ldots+\alpha_{k-2}$ and $\beta=\beta_0+\ldots+\beta_{k-2}$.
\end{thm} 
For a general cofinite Fuchsian group $\Gamma$, we have to take an extra average in order to calculate the moments.
\begin{thm}\label{moment2}
Let $\Gamma$ be a cofinite Fuchsian group with a cusp at $\infty$ of width 1 and let $ f\in \mathcal{S}_k(\Gamma)$ be a cusp form of even weight $k\geq 4$. Then for any non-negative integers;
$$\alpha_0,\ldots, \alpha_{k-2}, \beta_0,\ldots, \beta_{k-2},$$ 
 not all zero, we have that
\begin{align} \nonumber &\frac{1}{\#\tilde{\Omega}_X}\sum_{r \in  \tilde{\Omega}_X} \prod_{j=0}^{k-2}  \left (\frac{(2\pi/c(r))^{k-2}}{\Gamma(k-1)i}\int_r^\infty f(z) z^{j}dz\right)^{\alpha_j,\beta_j} 
\\ &=\frac{L_{f, \alpha, \beta} (k-1)}{1+\sum_{j=0}^{k-2}j\cdot (\alpha_j+\beta_j)}+O_{\alpha,\beta} (X^{-\delta_\Gamma }),\end{align}   
for some $\delta_\Gamma  >0$ depending on the spectral gap of $\Gamma$, where $\alpha=\alpha_0+\ldots+\alpha_{k-2}$ and $\beta=\beta_0+\ldots+\beta_{k-2}$.
\end{thm}
\begin{remark}\label{momentsmatch}Observe that the main terms above are exactly what we expect from the statements of Theorem \ref{distN} and Theorem \ref{distN2}, since $L_{f, \alpha, \beta}(k-1)$ is precisely the $(\alpha, \beta)$-moment of $L(f,Y ,k-1)$ with $Y$ equidistributed on $[0,1)$  (see (\ref{momentsL})) and 
$$  \int_{0}^1 z^{\alpha_1+\beta_1}z^{2(\alpha_2+\beta_2)}\cdots z^{(k-2)(\alpha_{k-2}+\beta_{k-2})}dz= \frac{1}{1+\sum_{j=0}^{k-2}j\cdot (\alpha_j+\beta_j)}. $$  
\end{remark}
\begin{proof}[Proof of Theorem \ref{moment1} and Theorem \ref{moment2}] In the following all implied constants may depend on $f$, $\alpha$ and $\beta$. In view of (\ref{period}) we can express the periods of $f$ as a linear combination of critical values of the additive twists $L(f,r ,s)$ and by the functional equation, we have the equality
$$ L(f,r , 1)= c(r)^{k-2}\frac{\Gamma(k-1)}{(2\pi)^{k-2}} L(f,\overline{r},k-1)$$
with $r=\gamma \infty$ and $\overline{r}=\gamma^{-1}\infty$. Using Proposition \ref{bounds} this implies that 
\begin{align}
\label{alphabeta} &\prod_{j=0}^{k-2}  \left (\frac{(2\pi/c(r))^{k-2}}{\Gamma(k-1)i}\int_r^\infty f(z) z^{j}dz\right)^{\alpha_j,\beta_j}\\
\nonumber =&L(f,\overline{r},k-1)^{\alpha,\beta}  r^{M}+O(c(r)^{-2})  
\end{align}
where $z^{\alpha,\beta}=z^\alpha \overline{z}^\beta$ and
$$M=M(\alpha_1,\ldots , \alpha_{k-2},\beta_1,\ldots , \beta_{k-2}):=\sum_{j=0}^{k-2}j\cdot (\alpha_j+\beta_j).$$  \\ 
In order to deal with the term $r^M$, we apply a standard smooth approximation. So let $\varphi: \R\rightarrow \R_{\geq 0}$ be a smooth function with compact support in $(0,1)$ such that $\int_0^1 \varphi(x)dx=1$. Then we define the following approximation to the Dirac measure at $x=0$;
$$  \varphi_\delta (x):= \delta^{-1} \varphi(x/\delta), $$
where $\delta>0$ is some small constant to be chosen appropriately. Notice that $\varphi_\delta$ is supported in $(0,\delta)$ and satisfies $\int_\R\varphi_\delta(x)dx=1$. We think of $\varphi_\delta$ as a function on the circle $\R/\Z$ by extending its values on $[0,1)$ periodically.\\
Associated to the periodic functions $h_j: \R/\Z \rightarrow \R$ given by $h_j(x)=x^j$ for $x\in [0,1)$ with $j\in \Z_{\geq 0}$, we define the following smooth approximation;
$$h_{j,\delta}:=h_j\ast \varphi_\delta, $$ 
where $\ast$ denotes the (additive) convolution product on $\R/\Z$. The approximation $h_{j,\delta}$ satisfies the following standard properties (which can be proved easily by partial integration using the compact support of $\varphi_\delta$); 
\begin{align}\label{fourierbound1}\widehat{h_{j,\delta}}(l) \ll _A \frac{1}{(\delta(1+|l|))^A}, \quad \widehat{h_{j,\delta}}(0)=\widehat{h_j}(0)=\frac{1}{j+1},\end{align}
where $A>0$ and $\widehat{h_{j,\delta}}$ denotes the Fourier transform on $\R/\Z$. And furthermore
 $$h_{j,\delta} (x)= h_j(x)+\int_{x-\delta}^x (h_j(t)-h_j(x))\varphi_\delta(x-t)dt = h_j(x)+O_j(\delta),$$
for $\delta \leq x <1$ using that $h_j$ is differentiable. This estimate fails for $0\leq x\leq \delta$, but it is standard to show that the contribution from $r\in \tilde{\Omega}_X $ (respectively $r\in \Omega_c$) with $r<\delta$ is negligible and will not affect the error terms. More precisely it is obvious that $\{r\in \Omega_c \mid r<\delta \}\ll \delta c $ and by using the cancellation in Kloosterman sums one can easily show that $\{r\in \tilde{\Omega}_X \mid r<\delta \}\ll \delta X^2$.\\ 
The upshot is that we can replace $r^M$ by the approximation $h_{M,\delta}$ at the cost of changing the error term in (\ref{alphabeta}) to $O(\delta+c(r)^{-2})$ (at least when we average over $\Omega_c$, respectively $\tilde{\Omega}_X$).\\

Finally we replace $h_{M,\delta}$ by its Fourier expansion to arrive at the following expression for the main term;
%------------------------------------------------VERSION 2---------------------------------------\\
%Associated to $h_j: [0,1)\rightarrow \R$, $h_j(x)=x^j$, we define two different convolutions given by 
%$$h^-_{j,\delta}:=(h_j\cdot 1_{[0,1-\delta]})\ast \varphi_\delta, \quad h^+_{j,\delta}:=h_j\ast \varphi_\delta, $$ 
%which we think about as smoothened versions of $h_j$. This is justified by the following standard properties which can easily be verified 
%\begin{align}\label{fourierbound2} h^-_{j,\delta} (x)\leq h_j(x)\leq h^+_{j,\delta} (x)+O(\delta),\quad \widehat{h^\pm_{j,\delta}}(l) \ll _A \frac{1}{(\delta(1+|l|))^A}, \quad \widehat{h^\pm_{j,\delta}}(0)=\frac{1}{j+1}+O(\delta),\end{align}
%where $A>0$ and $\widehat{g_{j,\delta}}$ denotes the Fourier transform on $[0,1)$. Now we compute the moments with $h_j$ replaced by respectively $h^-_{j,\delta}$ and $h^+_{j,\delta}$ and show that we get the same asymptotic expression. Thus by the inequalities above we get the result for our original moments.\\ 
%To do this we firstly write $h^\pm_{j,\delta}$ in its Fourier expansion; \\
%---------------------------------------------------------------------------------------\\
\begin{align}
\nonumber &L(f,\overline{r},k-1)^{\alpha,\beta}  h_{M,\delta}(r)\\
\nonumber= &\sum_{l\in \Z} \widehat{h_{M, \delta}}(l) e(lr) L(f,\overline{r},k-1)^{\alpha,\beta}   \\
\nonumber = &\sum_{l\in \Z} \widehat{h_{M,\delta}}(l) \sum_{n_1,\ldots, n_{\alpha+\beta}>0} \frac{a_f(n_1)\cdots a_f(n_\alpha) \overline{a_f(n_{\alpha+1})}\cdots\overline{a_f(n_{\alpha+\beta})}}{(n_1\cdots n_{\alpha+\beta})^{k-1}}\\
 \label{rawsum}&\times e(lr+\overline{r}(n_1+\ldots+n_\alpha -n_{\alpha+1}-\ldots -n_{\alpha+\beta})),
\end{align}	
using that $L(f,\overline{r},k-1)$ is absolutely convergent and so is the Fourier expansion of $h_{M,\delta}$ in view of (\ref{fourierbound1}).\\

Now the case where $\Gamma=\Gamma_0(N)$ is a Hecke congruence group, we average (\ref{alphabeta}) over $r\in \Omega_c$. Since all of the $r$-dependence is in the exponential, we see the Kloosterman sums entering the picture. The main contribution comes from the {\it diagonal terms} corresponding to $l=0$ and $n_1+\ldots+n_\alpha =n_{\alpha+1}+\ldots +n_{\alpha+\beta}$, which contribute
\begin{align}\label{maincon} L_{f,\alpha,\beta}(k-1)\widehat{h_{M,\delta}}(0)=L_{f,\alpha,\beta}(k-1) \frac{1}{M+1}.\end{align}
%\begin{align*} ((2\pi)^{-k+1}\Gamma(k-1) i)^{\alpha+\beta} L_{f,\alpha,\beta}(k-1)\widehat{h_{N,\delta}}(0)\sum_{r\in \tilde{\Omega}_X} c(r)^{(k-2)(\alpha+\beta)} \\
% = \frac{2X^{(k-2)(\alpha+\beta)+2}}{((k-2)(\alpha+\beta)+2)(1+N)vol(\Gamma)}L_{f,\alpha, \beta}(k-1)+O_\eps (X^{(k-2)(\alpha+\beta)+2\Re s_1}). \end{align*}
In order to handle the off-diagonal contributions, we apply Weil's bound (\ref{deligne}), which bounds the off-diagonal terms by the following; 
\begin{align*}&\ll \frac{d(c)c^{1/2}}{\varphi(c)} \sum_{l\neq 0} \sum_{n_1,\ldots, n_{\alpha+\beta}} |\widehat{h_{M,\delta}}(l)|\frac{|a_f(n_1)\cdots a_f(n_{\alpha+\beta})|}{(n_1\cdots n_{\alpha+\beta})^{k-1}}\left(l,c, \sum_{i=1}^\alpha n_i-\sum_{j=\alpha+1}^{\alpha+\beta} n_j\right)^{1/2}\\ 
&\ll_{\eps,\alpha,\beta} \frac{c^{1/2+\eps}}{\varphi(c)} \left(\sum_{l\neq 0} |\widehat{h_{M,\delta}}(l)| \right) \left( \sum_{n_1,\ldots, n_{\alpha+\beta}} \frac{|a_f(n_1)\cdots a_f(n_{\alpha+\beta})|}{(n_1\cdots n_{\alpha+\beta})^{k-1}} \max(n_1,\ldots, n_{\alpha+\beta}  )^{1/2-\eps}\right)\\
&\ll_{\eps,\alpha,\beta} \frac{c^{1/2+\eps}}{\varphi(c)} \sum_{l\neq 0} |\widehat{h_{M,\delta}}(l)|,
\end{align*} 
using Hecke's bound (\ref{hecke}) to show finiteness of the sum over $n_1,\ldots, n_{\alpha+\beta}$. Combining the above with the fact that $\varphi(c)\gg_\eps c^{1-\eps}$, we arrive at the following;
\begin{align}
\nonumber &\frac{1}{\varphi(c)}\sum_{\substack{0\leq a<c\\ (a,c)=1}} \prod_{j=0}^{k-2}  \left (\frac{(2\pi/c)^{k-2}}{i\Gamma(k-1)}\int_{a/c}^\infty f(z) z^{j}dz\right)^{\alpha_j,\beta_j}\\
 =  &L_{f,\alpha,\beta}(k-1)\widehat{h_{M,\delta}}(0)+O_\eps\left(\delta+c^{-2}+ c^{-1/2+\eps} \sum_{l\neq 0}| \widehat{h_{M,\delta}}(l)|\right).
\end{align}
Next we apply (\ref{fourierbound1}) with $A=2+\eps$ to ensure convergence of the sum $\sum_{l\neq 0}| \widehat{h_{M,\delta}}(l)|$ and arrive at the following error term $O_\eps(\delta+c^{-2}+ c^{-1/2+\eps}\delta^{-2-\eps})$. Finally we choose $\delta=c^{-1/6}$ to balance the error terms.\\

The argument when $\Gamma$ is a general cofinite Fuchsian group is similar, only now we average (\ref{alphabeta}) over $r\in \tilde{\Omega}_X$. In this case we also see Gauss sums enter the picture since we have
$$ \sum_{r\in \tilde{\Omega}_X} e(nr+m\overline{r})= \sum_{0<c\leq X} S_\Gamma(m,n;c),$$
where the sum is taking over lower left entries $c$ of matrices in $\Gamma$ and $S_\Gamma(m,n;c)$ is a (generalized) Kloosterman sum defined by (\ref{generalgauss}).\\
 
Again the main contribution is given by (\ref{maincon}). When dealing with the off-diagonal contribution, we first of all have to trivially bound the terms in (\ref{rawsum}) with $\min (n_1,\ldots, n_{\alpha+\beta})> X^{\delta_1}$ for some $\delta_1>0$ to be chosen appropriately. This is necessary since the dependence on the frequencies in (\ref{generalKlo}) is not as strong as in Weil's bound (actually this extra step is only needed when $k=4$).\\ 
Now using the trivial bound for the exponentials, this truncation yields
\begin{align}
\nonumber &\frac{1}{\# \tilde{\Omega}_X} \sum_{r\in \tilde{\Omega}_X} L(f,\overline{r},k-1)^{\alpha,\beta}  h_{M,\delta}(r) \\
\nonumber = &\sum_{l\in \Z} \widehat{h_{M,\delta}}(l) \sum_{0<n_1,\ldots, n_{\alpha+\beta}<X^{\delta_1}} \frac{a_f(n_1)\cdots a_f(n_\alpha) \overline{a_f(n_{\alpha+1})}\cdots\overline{a_f(n_{\alpha+\beta})}}{(n_1\cdots n_{\alpha+\beta})^{k-1}}\\
\label{rawsum2}&\times \frac{1}{\# \tilde{\Omega}_X}\sum_{r\in \tilde{\Omega}_X}e(lr+\overline{r}(n_1+\ldots+n_\alpha -n_{\alpha+1}-\ldots -n_{\alpha+\beta}))+O(X^{-\delta_1(k-3)/2}).
\end{align}
Now we apply the bound for sums of Kloosterman sums (\ref{generalKlo}) which yields the following bound for the remaining off-diagonal contribution from (\ref{rawsum2});
\begin{align*}
&\ll_{\alpha,\beta}  X^{-\delta_\Gamma } \left( \sum_{l}|\widehat{h_{M,\delta}}(l)| \cdot |l| \right)\\
&\qquad \qquad \qquad\qquad \qquad \times \sum_{0<n_1,\ldots, n_{\alpha+\beta}<X^{\delta_1}} \frac{|a_f(n_1)\cdots a_f(n_{\alpha+\beta})|}{(n_1\cdots n_{\alpha+\beta})^{k-1}} \max(n_1,\ldots, n_{\alpha+\beta}) \\
&\ll_{\alpha, \beta} X^{-\delta_\Gamma } \left( \sum_{l}|\widehat{h_{M,\delta}}(l)| \cdot |l| \right) \max (1, X^{-\delta_1 (k-5)/2})
   , \end{align*}
using also that $\# \tilde{\Omega}_X\gg X^2$ by (\ref{T(X)}).\\
Now we apply (\ref{fourierbound1}) with $A=3+\eps$ to ensure finiteness of the first sum above and then choose $\delta$ and $\delta_1$ to balance the error terms. This yields a power savings, which we will not make explicit. This finishes this case as well.
 \end{proof}

\subsection{Determining the limiting distribution} %%%%%%%%%%%%%%%%%%%%%%%%%%%%%%%%
In order to conclude the proofs of Theorem \ref{distN} and Theorem \ref{distN2}, we need to setup our problem in a probability theoretical framework.\\
Let $f\in \mathcal{S}_k(\Gamma)$ be as above and consider the following normalization of the periods of $f$; 
$$  \tilde{u}_{f,j}(r):= \frac{(2\pi /c(r))^{k-2}}{i\Gamma(k-1)}u_{f,j}(r)=\frac{(2\pi /c(r))^{k-2}}{i\Gamma(k-1)} \int_r^\infty f(z) z^j dz , \quad j=0,\ldots, k-2,$$
where $r=\gamma \infty$ with $\gamma\in \Gamma$.
According to whether $\Gamma$ is a congruence subgroup or not, we consider for each $c\equiv 0\, (N)$ (respectively $X>0$) the renormalized periods;
$$ \tilde{u}_f:=(\tilde{u}_{f,0},\ldots , \tilde{u}_{f,k-2} ),$$
as random variables defined on the outcome space $\Omega_c$ (respectively $\tilde{\Omega}_X$) endowed with the discrete $\sigma$-algebra and the uniform probability measure. Then one can easily check as in Remark \ref{momentsmatch} that Theorem \ref{moment1} (respectively Theorem \ref{moment2}) implies that as $c\rightarrow \infty$ (respectively $X\rightarrow \infty$), the moments of the random variables $\tilde{u}_f$ converge to those of the random variable
$$F(Y,Z)=(F_0(Y,Z),\ldots, F_{k-2}(Y,Z))^T,$$ 
where $Y,Z$ are two independent random variables uniformly distributed with respect to the Lebesgue measure on $[0,1)$ and 
$F:[0,1)\times [0,1)\rightarrow \C^{k-1}$ is given (as in Theorem \ref{distN}) by 
$$F(y,z)= L(f,y ,k-1)(1, z, \ldots, z^{k-2})^T\in \C^{k-1}.$$ 
As an example let us consider the (complex) moment of $F(Y,Z)$ corresponding to the tuple $((1,1),(1,1),\ldots, (1,1))$; 
\begin{align*}& \mathbb{E}\left(F(Y,Z)^{((1,1),(1,1),\ldots, (1,1))}\right):=\int_0^1 \int_0^1 |F_0(y,z)|^2\cdots |F_{k-2}(y,z)|^2dydz\\
&=\left(\int_0^1 |L(f,y,k-1)|^{2(k-1)} dy\right)\left( \int_0^1 z^{0+2+\ldots+2(k-2)}dz\right)\\
&= \left(\sum_{\substack{n_1+\ldots +n_{k-1}\\=n_{k}+\ldots +n_{2(k-1)}}} \frac{a_f(n_1)\ldots \overline{a_f(n_{2(k-1)})}}{(n_1\cdots n_{2(k-1)})^{k-1}}\right)\cdot \frac{1}{1+(k-2)(k-1)}, \\
\end{align*}
which we see match the corresponding moment in Theorem \ref{moment1} and Theorem \ref{moment2}.\\

In order to conclude that the random variables associated with $\tilde{u}_f$ converge in distribution to $F(Y,Z)$ as $c\rightarrow \infty$ (respectively $X\rightarrow \infty$), we will combine three results from probability theory due to Fréchet--Shohat, Cramér--Wold and Carleman respectively. A similar but slightly simpler argument was carried out in \cite[Section 5.4]{No19}. 
\begin{proof}[Proof of Theorem \ref{distN} and Theorem \ref{distN2}]
Given a sequence of 1-dimensional real random variables $(X'_n)_{n\geq 1}$ such that all moments exist and converge as $n\rightarrow \infty$ to the moments of some other random variable $Y'$ then it follows from the Fréchet--Shohat Theorem \cite[p. 17]{Serf} that if $Y'$ is uniquely determined by its moments then the random variables $(X'_n)_{n\geq 1}$ converge {\it in distribution} to $Y'$.\\ 
Our random variables are however multidimensional so we have to combine the Fréchet--Shohat Theorem with a result of Cramér and Wold \cite[p. 18]{Serf}, which says that if $(X'_n)_{n\geq 1}$ is a sequence of $(d+1)$-dimensional real random variables;
$$X'_{n}=(X'_{n,0},\ldots , X'_{n,d}),$$
and $Y'=(Y'_0,\ldots , Y'_d)$ is a $(d+1)$-dimensional random variable such that
$$ t_0X'_{n,0}+\ldots +t_d X'_{n,d} $$
converge in distribution as $n\rightarrow \infty$ to
$$ t_0Y'_0+\ldots +t_d Y'_d $$
for any $(d+1)$-tuple $(t_0,\ldots, t_d)\in \R^{d+1}$, {\it then} $X'_n$ converges in distribution to $Y'$ as $n\rightarrow \infty$.\\
Thus by combining Fréchet--Shohat and Cramér--Wold with our calculation of the moments in Theorem \ref{moment1} (respectively Theorem \ref{moment2}), it is enough to show that for any (say non-trivial) linear combination, the following random variable;
\begin{align}\label{FrechetShohat}&t_0  \Re L(f,Y,k-1) +t_1  \Re L(f,Y,k-1)Z+  \\
\nonumber& \qquad \qquad \qquad \qquad \qquad \qquad \qquad \qquad\ldots+t_{k-2} \Re L(f,Y,k-1)Z^{k-2}\\
\nonumber &+t_{k-1}  \Im L(f,Y ,k-1) +t_{k}  \Im L(f,Y ,k-1)Z+  \\
\nonumber& \qquad \qquad \qquad \qquad \qquad \qquad \qquad \qquad\ldots+t_{2k-3} \Im L(f,Y ,k-1)Z^{k-2}
\end{align}
is uniquely determined by its moments. By a condition due to Carleman (see (\ref{carleman}) below), this boils down to showing that the moments are sufficiently bounded from above, which is clear in our case since $Z$ is bounded by 1 and 
$$|L(f,Y ,k-1)|\leq \sum_{n\geq 1} \frac{|a_f(n)|}{n^{k-1}}<\infty,$$ 
both with probability one. To sum up and be precise; if we denote by $\alpha_{2m}$ the $2m$'th moment of (\ref{FrechetShohat}), then we have 
\begin{align}
\label{carleman}\sum_{m\geq 1} \alpha_{2m}^{-1/2m}\geq \sum_{m\geq 1} \left(\left(c(t_0,\ldots, t_{2k-3}) \sum_{n\geq 1} \frac{|a_f(n)|}{n^{k-1}}\right)^{2m}\right)^{-1/2m}=\infty,
 \end{align}
where $c(t_0,\ldots, t_{2k-3})$ is a certain constant depending on $t_0,\ldots, t_{2k-3}$. Thus it follows from the Carleman condition \cite[p. 46]{Serf} that the random variable (\ref{FrechetShohat}) is uniquely determined by its moments. Thus we conclude the proof of Theorem \ref{distN} and Theorem \ref{distN2} using the results of Fréchet--Shohat and Cramér--Wold mentioned above. 
\end{proof}

\bibliographystyle{amsplain}

\providecommand{\bysame}{\leavevmode\hbox to3em{\hrulefill}\thinspace}
\providecommand{\MR}{\relax\ifhmode\unskip\space\fi MR }
% \MRhref is called by the amsart/book/proc definition of \MR.
\providecommand{\MRhref}[2]{%
  \href{http://www.ams.org/mathscinet-getitem?mr=#1}{#2}
}
\providecommand{\href}[2]{#2}

\end{document}